\newtheorem{theorem}{Theorem}[section]
\newtheorem{lemma}[theorem]{Lemma}
\numberwithin{equation}{section}
\begin{document}
\baselineskip=15.5pt

\title[Holomorphic Cartan geometries on complex tori]{Holomorphic Cartan geometries on complex 
tori}

\author[I. Biswas]{Indranil Biswas}

\address{School of Mathematics, Tata Institute of Fundamental
Research, Homi Bhabha Road, Mumbai 400005, India}

\email{indranil@math.tifr.res.in}

\author[S. Dumitrescu]{Sorin Dumitrescu}

\address{Universit\'e C\^ote d'Azur, CNRS, LJAD, France}

\email{dumitres@unice.fr}

\subjclass[2000]{53B15, 53C56, 53A55}

\keywords{Complex tori, holomorphic Cartan geometries, translation invariance}

\date{}

\begin{abstract} 
In \cite{DM} it was asked whether all flat holomorphic Cartan geometries $(G,H)$
on a complex torus are translation invariant. We answer this affirmatively under the
assumption that the complex Lie group $G$ is affine. More precisely,
we show that every holomorphic Cartan geometry of type $(G,H)$, with $G$ a complex 
affine Lie group, on any complex torus is translation invariant.
\end{abstract}

\maketitle

\vspace{0.1cm}

\noindent{{R\'esum\'e.}} Nous d\'emontrons que sur les tores complexes, toutes les 
g\'eom\'etries de Cartan holomorphes model\'ees sur $(G,H)$, avec $G$ groupe de Lie complexe 
affine, sont invariantes par translation.

{\bf Version fran\c caise abr\'eg\'ee.} Dans cette note nous \'etudions les g\'eom\'etries de 
Cartan holomorphes sur les tores complexes. Rappelons que, gr\^ace aux r\'esultats de 
\cite{BM,D1,D2}, les tores complexes sont, \`a rev\^etement fini pr\`es, les seules vari\'et\'es 
de Calabi-Yau qui poss\`edent des g\'eom\'etries de Cartan holomorphes. Il a \'et\'e 
conjectur\'e dans \cite{DM} que sur les tores complexes, toutes les g\'eom\'etries de Cartan 
holomorphes plates sont n\'ecessairement invariantes par translation. Cette conjecture a \'et\'e 
prouv\'ee dans \cite{DM} dans certains cas particuliers (par exemple, pour les tores de 
dimension complexe un et deux et, en toute dimension, dans le cas $G$ nilpotent). Il a 
\'egalement \'et\'e d\'emontr\'e dans \cite{DM} que si on consid\`ere, sur chaque tore complexe, 
l'espace des g\'eom\'etries de Cartan holomorphes plates de mod\`ele $(G,H)$ fix\'e, avec $G$ et 
$H$ groupes alg\'ebriques complexes, les g\'eom\'etries de Cartan invariantes par translation 
forment un sous-ensemble ouvert et ferm\'e (et donc une union de composantes connexes). Dans la 
direction de la conjecture il a aussi \'et\'e prouv\'e dans \cite{Mc} que sur les tores 
complexes, toutes les g\'eom\'etries de Cartan {\it paraboliques} sont invariantes par 
translation.

Dans cet article nous d\'emontrons que {\it sur les tores complexes, toutes les g\'eom\'etries 
de Cartan holomorphes model\'ees sur $(G,H)$, avec $G$ groupe de Lie complexe affine, sont 
invariantes par translation.}

La d\'emonstration de ce th\'eor\`eme repose fortement sur des r\'esultats de \cite{BG}, 
inspir\'e de \cite{Si}. Plus pr\'ecis\'ement, il est d\'emontr\'e dans \cite{BG} que les 
fibr\'es principaux holomorphes $E_G$, de groupe structural complexe affine $G$, au-dessus d'un 
tore complexe $X$, admettent des connexions holomorphes exactement quand ils sont homog\`enes 
(i.e., chaque translation dans $X$ se rel\`eve en un isomorphisme du fibr\'e principal $E_G$). 
Ceci est \'egalement \'equivalent avec la fait que $E_G$ soit pseudostable, avec les deux 
premi\`eres classes de Chern nulles \cite{BG}. De plus, il est montr\'e dans \cite{BG} (en 
adaptant des arguments de \cite{Si}) que ces fibr\'es principaux holomorphes admettent 
\'egalement une connexion canonique plate.

Dans cet article on utilise cette connexion canonique plate sur le fibr\'e principal $E_G$, 
associ\'e \`a la g\'eom\'etrie de Cartan model\'ee sur $(G,H)$, pour relever l'action du tore 
$X$ par translation sur lui-m\^eme en une action qui pr\'eserve la classe d'isomorphisme de 
$E_G$, ainsi que sa connexion canonique plate. On d\'emontre ensuite que cette action pr\'eserve 
\'egalement la connexion holomorphe de $E_G$ et le sous-fibr\'e principal $E_H$ (de groupe 
structural $H$) qui caract\'erisent la g\'eom\'etrie de Cartan. Ceci implique que la 
g\'eom\'etrie de Cartan model\'ee sur $(G,H)$ est invariante par translation.

\section{Introduction}

A classical result proved by Inoue, Kobayashi and Ochiai \cite{IKO}, which was done 
using Yau's proof of Calabi conjecture, shows that a compact complex K\"ahler 
manifold, bearing a holomorphic connection on its holomorphic tangent bundle, admits a 
finite unramified holomorphic covering by some compact complex torus. The pull-back of such a 
holomorphic connection to the covering torus is necessarily translation invariant.

This result was generalized in \cite{BM}, \cite{D1}, \cite{D2} for two different 
classes of holomorphic geometric structures: the rigid geometric structures in 
Gromov's sense \cite{Gr}, and the Cartan geometries. More precisely, any compact 
complex K\"ahler Calabi-Yau manifold bearing a holomorphic rigid geometric structure 
(or a holomorphic Cartan geometry) admits a finite unramified holomorphic covering
by a complex torus.

There are interesting examples of holomorphic rigid geometric structures on complex 
tori that are not translation invariant. They can be constructed using Ghys 
example of holomorphic foliations on complex tori which are not translation 
invariant \cite{Gh}. Let us recall that the main result of \cite{Gh} is a 
classification of codimension one (nonsingular) holomorphic foliations on complex 
tori. The holomorphic foliations 
are defined by the kernel of some global holomorphic 1-form \(\omega\) 
(and hence are translation invariant), except for those complex tori $T$ which admit 
a holomorphic surjective map $\pi$ to an elliptic curve $E$. In the last case one 
can consider a global coordinate $z$ on $E$, a nonconstant meromorphic function 
$u(z)$ on $E$ and the pull-back to $T$ of the meromorphic closed 1-form $u(z)dz$. The
foliation 
given by the kernel of $\Omega\,= \,\pi^*(u(z)dz) + \omega$ extends to all of $T$ as 
a codimension one nonsingular holomorphic foliation; this foliation
coincides with the one given by the
fibration $\pi$ exactly when $\omega$ vanishes on the fibers of $\pi$. This 
foliation is not invariant by all translations in the torus $T$, but only by those 
lying in the kernel of the linear map underlying $\pi$. Consequently, the 
holomorphic rigid geometric structure on $T$ obtained by considering the previous 
holomorphic foliation together with the holomorphic standard flat connection of $T$ 
is not translation invariant (it is invariant only by those translations lying in 
the kernel of the linear map underlying $\pi$).

This note deals with holomorphic Cartan geometries on complex tori. In contrast
with the 
situation of the geometric structures in the previous example, it was conjectured 
in \cite{DM} that {\it all flat holomorphic Cartan geometries on complex tori are 
translation invariant}. The conjecture was proved in \cite{DM} for some particular 
cases --- for example, when the torus is one or two dimensional, or when the
structure group $G$ of the
Cartan the geometry is nilpotent. For $G$ complex algebraic, it was also proved in 
\cite{DM} that, on any torus $T$, translation invariant Cartan geometries form an 
open and closed subset (and, consequently, a union of connected components) in the 
space of Cartan geometries with a given model $(G,H)$ on $T$. Moreover, Theorem 3 
in \cite{Mc} proves that every holomorphic parabolic Cartan geometry on any complex 
torus is translation invariant.

We prove here the following:

{\it Every holomorphic Cartan geometry of type $(G,H)$, with $G$ 
a complex affine group, on any complex torus is translation invariant.}

\section{Preliminaries}

\subsection{Holomorphic Cartan geometries}

Let $G$ be a connected complex Lie group and $H\, \subset\, G$ a complex Lie subgroup.
The Lie algebras of $G$ and $H$ will be denoted by $\mathfrak g$ and $\mathfrak h$
respectively. A holomorphic Cartan geometry of type $(G,\, H)$ on a complex manifold $X$
is a principal $H$--bundle $f\, :\, E_H\, \longrightarrow\, X$ and a $\mathfrak g$
valued holomorphic $1$-form $\omega\ :\, TE_H\, \longrightarrow\, E_H\times {\mathfrak g}$
on the total space of $E_H$, such that
\begin{enumerate}
\item $\omega$ is $H$--equivariant for the adjoint action of $H$ on $\mathfrak g$;

\item $\omega$ is an isomorphism;

\item the restriction of $\omega$ to any fiber of $f$ coincides with the Maurer--Cartan form
associated to the action of $H$ on $E_H$.
\end{enumerate}
Let $\text{At}(E_H)\,=\, TE_H/H\, \longrightarrow\, X$ be the Atiyah bundle for $E_H$.
Let $$E_G\,:=\, E_H\times^H G \,\longrightarrow\, X$$ be the holomorphic principal $G$--bundle
obtained by extending the structure group of $E_H$ using the inclusion of $H$ in $G$. Giving
a form $\omega$ satisfying the above three conditions is equivalent to giving a holomorphic
isomorphism
$$
\beta\, :\, \text{At}(E_H)\, \longrightarrow\, \text{ad}(E_G)\,:=\,E_G\times^G {\mathfrak g}
$$
such that the following diagram is commutative:
\begin{equation}\label{e1}
\begin{matrix}
0 &\longrightarrow & \text{ad}(E_H) &\stackrel{i_H}{\longrightarrow} & \text{At}(E_H) &\longrightarrow &
TX &\longrightarrow & 0\\
&& \Vert &&~ \Big\downarrow\beta && ~\Big\downarrow\sim\\
0 &\longrightarrow & \text{ad}(E_H) &\longrightarrow & \text{ad}(E_G) &\longrightarrow &
\text{ad}(E_G)/\text{ad}(E_H) &\longrightarrow & 0
\end{matrix}
\end{equation}
where the sequence at the top is the Atiyah exact sequence for $E_H$ (see
\cite{At} for the Atiyah exact sequence); the inclusion $\text{ad}(E_H)\,
\hookrightarrow\, \text{ad}(E_G)$ in \eqref{e1} is given by the inclusion of $\mathfrak h$
in $\mathfrak g$. Consider the injective homomorphism
$$\text{ad}(E_H)\, \longrightarrow\, \text{ad}(E_G)\oplus \text{At}(E_H)\, ,\ \
v\, \longmapsto\, (v,\, -i_H(v))\, ,$$
where $i_H$ is the homomorphism in \eqref{e1}.
The corresponding quotient bundle $(\text{ad}(E_G)\oplus \text{At}(E_H))/\text{ad}(E_H)$ is identified
with the Atiyah bundle $\text{At}(E_G)$ for $E_G$. If $\beta$ is a homomorphism as above defining
a holomorphic Cartan geometry on $X$ of type $(G,\, H)$, then the homomorphism
$$
\beta'\, :\, \text{At}(E_G)\,=\, (\text{ad}(E_G)\oplus \text{At}(E_H))/\text{ad}(E_H)\,
\longrightarrow\, \text{ad}(E_G)\, ,\ \ (v,\,w)\,\longmapsto\, v+\beta(w)
$$
has the property that the composition
$$
\text{ad}(E_G)\, \hookrightarrow\, \text{At}(E_G)\, \stackrel{\beta'}{\longrightarrow}\,
\text{ad}(E_G)
$$
coincides with the identity map of $\text{ad}(E_G)$, where the inclusion of $\text{ad}(E_G)$
in $\text{At}(E_G)$ is the one occurring in the Atiyah exact sequence for $E_G$. Therefore, $\beta'$
produces a holomorphic splitting of the Atiyah exact sequence for $E_G$. Hence $\beta'$ is a
holomorphic connection on $E_G$ \cite{At}.

The Cartan geometry $\beta$ is called \textit{flat} if the curvature of the connection on $E_G$
defined by $\beta'$ vanishes identically.

\subsection{Cartan geometry on a complex torus}\label{se2.2}

We now take $X$ to be a compact complex Lie group, so $X$ is a complex torus. For any $x\,\in\, X$, let
$$
t_x\, :\, X\, \longrightarrow\, X\, , \ \ y\,\longmapsto\, y+x
$$
be the translation by $x$. A holomorphic Cartan geometry $(E_H,\, \beta)$ on $X$ of type
$(G,\, H)$ is called translation invariant if for every $x\,\in\, X$, there is a holomorphic
isomorphism of principal $G$--bundles
$$
\delta_x\, :\, E_G\, \longrightarrow\, t^*_x E_G
$$
such that
\begin{enumerate}
\item $\delta_x(E_H)\,=\, E_H$, and

\item $\delta^*_x\beta\,=\,\beta$.
\end{enumerate}

A conjecture in \cite{DM} says that any flat holomorphic Cartan geometry on a complex torus is
translation invariant (see the first paragraph in the introduction \cite[p.~1]{DM}).

A complex Lie group $G$ will be called \textit{affine} if there is a holomorphic homomorphism
$$
\alpha\, :\, G\, \longrightarrow\, \text{GL}(N, {\mathbb C})
$$
for some positive integer $N$, such that the corresponding homomorphism of Lie algebras
$$
d\alpha\, :\, \text{Lie}(G) \, \longrightarrow\, \text{Lie}(\text{GL}(N, {\mathbb C}))\,=\,
\text{M}(N, {\mathbb C})
$$
is injective.

We will prove that every holomorphic Cartan geometry of type $(G,\, H)$, with $G$ 
affine, on any complex torus is translation invariant. This would imply that they are 
all constructed in the following way.

\subsection{Invariant Cartan geometry on a complex torus}

Let $\widetilde{X}$ be the universal cover of the complex group $X$. The complex Lie group 
$\widetilde{X}$ acts holomorphically on $X$ via translations. Let $E_H$ be a holomorphic 
principal $H$--bundle $E_H$ on $X$ equipped with a holomorphic lift of the action of 
$\widetilde{X}$ on $X$ such that the actions of $H$ and $\widetilde{X}$ on $E_H$ commute. This 
action of $\widetilde{X}$ on $E_H$ produces a flat holomorphic connection on the principal 
$H$--bundle $E_H$. This flat connection on $E_H$ will be denoted by $\nabla^{H}$.

As before, $E_G\,:=\, E_H\times^H G \,\longrightarrow\, X$ is the holomorphic
principal $G$--bundle obtained by extending the structure group of $E_G$. The
holomorphic connection on $E_G$ induced by the above connection $\nabla^{H}$ will
be denoted by $\nabla^{G}$. Let $V_0\, :=\, T_0 X$ be the Lie algebra of $X$. 
Take any holomorphic section
$$
\theta\, \in\, H^0(X,\, \text{ad}(E_G)\otimes V^*_0)\, .
$$
Note that $\theta$ is a holomorphic $1$-form on $X$ with values in $\text{ad}(E_G)$.
Therefore, $\nabla^{G}+\theta$ is a holomorphic connection on $E_G$.

Assume that $\theta$ is flat with respect to the connection on
$\text{ad}(E_G)\otimes V^*_0$ induced by the connection $\nabla^{G}$ on $E_G$
together with the
trivial connection on the trivial vector bundle $X\times V^*_0$.

Note that $\nabla^{H}$ defines a holomorphic $1$-form on the total space of $E_H$ 
with values in $\mathfrak h$. On the other hand, $\theta$ defines a holomorphic 
$1$-form on $E_H$ with values in $\mathfrak g$. Therefore, $\nabla^{H}+\theta$ is 
a holomorphic $1$-form on $E_H$ with values in $\mathfrak g$.
Assume that the $\text{ad}(E_G)$-valued $1$-form $\theta$ satisfies the condition that this
form $\nabla^{H}+\theta\, :\, TE_H\, \longrightarrow\, E_H\times \mathfrak g$ is
an isomorphism.

It is evident that the pair $(E_H,\, \nabla^{H}+\theta)$ defines a Cartan 
geometry on $X$ of type $(G,\, H)$. It is straight-forward to check that this Cartan geometry is 
translation invariant.

The result mentioned in Section \ref{se2.2} that 
every holomorphic Cartan geometry of type $(G,\, H)$, with $G$
affine, on $X$ is translation invariant, in fact implies that if $G$ is
affine, then all holomorphic Cartan geometries of type $(G,\, H)$ on $X$ are
of the type described above. This would be elaborated in Section \ref{se3.3}.

\section{Principal bundles with holomorphic connection over a torus}

\subsection{A canonical flat connection}

As before, $X$ is a compact complex torus. Let $G$ be a connected complex affine group and $E_G$ a
holomorphic principal $G$--bundle over $X$. In \cite{BG} the following was proved:

If $E_G$ admits a holomorphic connection, then it admits a flat holomorphic connection; see 
\cite[p.~41, Theorem 4.1]{BG}.

It should be clarified that in \cite[Theorem 4.1]{BG} it is assumed 
that $G$ admits a holomorphic embedding into some linear group $\text{GL}(N, {\mathbb C})$. 
However, if $p\, :\, G\, \longrightarrow\, G'$ is a holomorphic homomorphism of complex Lie groups 
that produces an isomorphism of Lie algebras, then the holomorphic connections on a holomorphic 
principal $G$--bundle $E_G$ are in bijection with the holomorphic connections on the associated 
holomorphic principal $G'$--bundle $E_{G'}\, :=\, E_G\times^G G'$. Indeed, this follows immediately 
from that fact that the Atiyah bundle and the Atiyah exact sequence for $E_G$ are canonically 
identified with the Atiyah bundle and the Atiyah exact sequence respectively for $E_{G'}$. Therefore,
the above mentioned result of \cite[Theorem 4.1]{BG} for $E_{G'}$ implies that it also holds
for $E_G$.

Assume now that $E_G$ admits a holomorphic connection. In \cite[p.~41, Theorem 4.1]{BG} it was
proved that $E_G$ admits a canonical flat connection. Indeed, $E_G$ is pseudostable and its
characteristic classes of degree one and two vanish (see the fourth statement in
\cite[p.~41, Theorem 4.1]{BG}). Now setting the zero Higgs field on $E_G$, from \cite[p.~20,
Theorem 1.1]{BG} we conclude that $E_G$ has a canonical flat connection; this
canonical connection on $E_G$ will be denoted by $\nabla^{E_G}$. This connection $\nabla^{E_G}$
enjoys the following properties:

Let $G\, \longrightarrow\, M$ be a holomorphic homomorphism of affine groups, and let $$E_M\,:=\,
E_G\times^G M\, \longrightarrow \, X$$ be the associated holomorphic principal $M$--bundle. Then the
canonical connection $\nabla^{E_M}$ on $E_M$ coincides with the one induced by $\nabla^{E_G}$.
Now take $M\,=\, \text{GL}(n, {\mathbb C})$; the holomorphic connection, induced by
$\nabla^{E_M}$, on the rank $n$ vector bundle $E_n$ associated to $E_M$ by the standard
representation of $\text{GL}(n, {\mathbb C})$ will be denoted by $\nabla^{E_n}$. If $V$ is a
pseudostable vector bundle on $X$ with $c_1(V)\,=\, 0\, =\, c_2(V)$, and
$$\phi\, :\, V\, \longrightarrow\, E_n$$ is any holomorphic homomorphism of vector bundles, then
$\phi$ is flat with respect to $\nabla^{E_n}$ and the canonical connection on $V$ (see
\cite{Si}, \cite{BG}.) Below we briefly recall from \cite{Si} and \cite{BG}.

A pseudostable vector bundle $W$ with vanishing Chern classes has a canonical flat connection which is
obtained by setting the zero Higgs field on $W$ \cite[p.~36, Lemma~3.5]{Si}. This canonical connection
on such vector bundles is compatible with the operations of direct sum, tensor product, dualization,
coherent sheaf homomorphisms etc. From these properties it can be deduced that any pseudostable
principal bundle with vanishing characteristic classes has a canonical flat connection
\cite[p.~20, Theorem 1.1]{BG}.

\subsection{Flat connection and translation invariance}

Let $p\, :\, \widetilde{X}\, \longrightarrow\, X$ be the universal cover, so $\widetilde X$
is a complex Lie group isomorphic to ${\mathbb C}^d$, where $d\,=\, \dim_{\mathbb C} X$. Let
$$
\varphi\, :\, \widetilde{X}\times X \, \longrightarrow\, X
$$
be the holomorphic map defined by $(y,\, x) \, \longmapsto\, x+p(y)$. Define the map
$$
\widetilde{\varphi}\, :\, {\mathbb R}\times \widetilde{X}\times X \, \longrightarrow\, X\, ,\ \
(\lambda,\, y,\, x) \, \longmapsto\, x+p(\lambda\cdot y)\, .
$$
Let $E_G$ be a holomorphic principal $G$--bundle on $X$ equipped with a flat connection $\nabla^G$. 
Consider the flat principal $G$--bundle $(\widetilde{\varphi}^*E_G,\, 
\widetilde{\varphi}^*\nabla^G)$ on ${\mathbb R}\times \widetilde{X}\times X$. The flat principal 
$G$--bundles on $\widetilde{X}\times X$ given by $(\widetilde{\varphi}^*E_G,\, 
\widetilde{\varphi}^*\nabla^G)\vert_{\{0\}\times \widetilde{X}\times X}$ and 
$(\widetilde{\varphi}^*E_G,\, \widetilde{\varphi}^*\nabla^G)\vert_{\{1\}\times \widetilde{X}\times 
X}$ are canonically identified by taking parallel translations along the paths $\lambda\, 
\longmapsto\, (\lambda,\, y,\, x)$, $\lambda\, \in\, [0,\, 1]$, in ${\mathbb R}\times \widetilde{X}\times X$. Note that
$(\widetilde{\varphi}^*E_G,\, \widetilde{\varphi}^*\nabla^G)\vert_{\{1\}\times \widetilde{X}\times X}$
is identified with the pullback $({\varphi}^*E_G,\, {\varphi}^*\nabla^G)$, while
$(\widetilde{\varphi}^*E_G,\, \widetilde{\varphi}^*\nabla^G)\vert_{\{0\}\times \widetilde{X}\times X}$
is identified with the pullback $(p^*_XE_G,\, p^*_X\nabla^G)$, where $p_X$ is the natural
projection of $\widetilde{X}\times X$ to $X$.

It is straightforward to check that the above isomorphism between $({\varphi}^*E_G,\, 
{\varphi}^*\nabla^G)$ and $(p^*_XE_G,\, p^*_X\nabla^G)$ produces a translation invariance structure 
on $E_G$ that preserves the connection $\nabla^G$.

\subsection{Translation invariance of Cartan geometries}\label{se3.3}

Let $G$ be a complex affine Lie group. Let $(E_H, \beta)$ be a holomorphic Cartan
geometry of type $(G,\, H)$ on a compact complex torus $X$. Let $\nabla^G$ be the holomorphic
connection on the principal $G$--bundle $E_G\,=\, E_H\times^H G$ defined by $\beta$. Let
$\nabla^{G,0}$ be the canonical flat connection on $E_G$. So we have
\begin{equation}\label{e2}
\theta\,:=\,\nabla^G- \nabla^{G,0}\,=\, H^0(X,\, \text{ad}(E_G)\otimes\Omega^1_X)\, ,
\end{equation}
where $\text{ad}(E_G)\,=\, E_G\times^G {\mathfrak g}$ is the adjoint bundle for $E_G$.

\begin{lemma}\label{lem1}
The translation invariance structure on $E_G$ given by the flat connection $\nabla^{G,0}$
preserves the holomorphic connection $\nabla^G$.
\end{lemma}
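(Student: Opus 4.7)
The plan is to write $\nabla^G = \nabla^{G,0} + \theta$ as in \eqref{e2} and then show that the translation invariance structure constructed in Section 3.2 (which by construction preserves $\nabla^{G,0}$) also preserves $\theta$. Once $\nabla^{G,0}$ and $\theta$ are both preserved, the sum $\nabla^G$ is automatically preserved.

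First I would reformulate $\theta$ using the triviality of $\Omega^1_X$ on a complex torus. Since $\Omega^1_X \cong V_0^* \otimes \mathcal{O}_X$ canonically, the section $\theta \in H^0(X,\,\text{ad}(E_G)\otimes \Omega^1_X)$ can be viewed as a holomorphic homomorphism of vector bundles
\[
\theta\, :\, V_0\otimes \mathcal{O}_X\, \longrightarrow\, \text{ad}(E_G).
\]
Both the source and target fit into the framework recalled from \cite{Si} and \cite{BG}: the trivial bundle $V_0\otimes \mathcal{O}_X$ is plainly pseudostable with vanishing Chern classes, while $\text{ad}(E_G)$ is obtained from $E_G$ via the adjoint representation, so it is pseudostable with $c_1\,=\,0\,=\,c_2$ because $E_G$ admits a holomorphic connection and hence satisfies these properties (see the fourth statement in \cite[Theorem 4.1]{BG}).

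Next I would invoke the flatness result: any holomorphic homomorphism between such bundles is automatically flat with respect to the canonical flat connections on source and target. The canonical flat connection on $V_0\otimes \mathcal{O}_X$ is the trivial one, and the canonical flat connection on $\text{ad}(E_G)$ coincides with the one induced by $\nabla^{G,0}$ under the adjoint representation $G\,\longrightarrow\,\text{GL}(\mathfrak g)$, by the functoriality of the canonical flat connection recalled in Section 3.1. Therefore $\theta$ is flat with respect to the connection on $\text{ad}(E_G)\otimes \Omega^1_X$ induced by $\nabla^{G,0}$ and the trivial connection on $\Omega^1_X$.

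Finally I would transfer this flatness to the translation invariance statement using the construction of Section 3.2. Pulling back $\theta$ along $\widetilde{\varphi}\,:\,{\mathbb R}\times \widetilde{X}\times X\,\longrightarrow\, X$, the flatness of $\theta$ means that $\widetilde{\varphi}^*\theta$ is parallel along each path $\lambda\,\longmapsto\,(\lambda,\,y,\,x)$ for the connection induced by $\widetilde{\varphi}^*\nabla^{G,0}$. Hence parallel transport from $\lambda=0$ to $\lambda=1$ — which is exactly what defines the isomorphism between $p_X^*E_G$ and $\varphi^*E_G$ underlying the translation invariance structure — sends $p_X^*\theta$ to $\varphi^*\theta$. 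Equivalently, each $\delta_x$ satisfies $\delta_x^*(t_x^*\theta)\,=\,\theta$, which combined with $\delta_x^*(t_x^*\nabla^{G,0})\,=\,\nabla^{G,0}$ yields $\delta_x^*(t_x^*\nabla^{G})\,=\,\nabla^{G}$, as required.

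The main obstacle is the third step: carefully checking that flatness of $\theta$, a section of $\text{ad}(E_G)\otimes \Omega^1_X$, relative to the product connection is really the correct infinitesimal version of being preserved by the parallel-transport isomorphisms $\delta_x$, and in particular that the trivial connection on the $\Omega^1_X$-factor is the right one (i.e., that under the canonical trivialization by $V_0^*$, translation invariance of a one-form on $X$ is equivalent to constancy of its coefficients). The rest is a direct combination of the results stated earlier in the excerpt.
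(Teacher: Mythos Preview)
Your proposal is correct and follows essentially the same route as the paper: reduce to showing that $\theta$ is flat for the connection on $\text{ad}(E_G)\otimes\Omega^1_X$ induced by $\nabla^{G,0}$ and the trivial connection on $\Omega^1_X$, and deduce this from the Simpson/Biswas--G\'omez principle that holomorphic maps between pseudostable bundles with vanishing Chern classes are automatically flat for the canonical connections. The paper phrases the last point as ``any holomorphic section of $\text{ad}(E_G)\otimes\Omega^1_X$ is flat'' (viewing a section as a map from $\mathcal{O}_X$), whereas you view $\theta$ as a map $V_0\otimes\mathcal{O}_X\to\text{ad}(E_G)$; these are the same argument, and your explicit unpacking of why flatness of $\theta$ translates into preservation under the parallel-transport isomorphisms of Section~3.2 simply fills in what the paper leaves as ``it suffices to show''.
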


\begin{proof}
Let ${\nabla}^{\rm ad}$ be the flat holomorphic connection on
the adjoint vector bundle $\text{ad}(E_G)$ induced by the 
flat holomorphic connection $\nabla^{G,0}$ on $E_G$. Let $\widetilde{\nabla}^{\rm ad}$ be the 
connection on $\text{ad}(E_G)\otimes\Omega^1_X$ given by the above connection 
${\nabla}^{\rm ad}$ on $\text{ad}(E_G)$ and the unique trivial connection on 
$\Omega^1_X$. To prove the lemma it suffices to show that the section $\theta$ in \eqref{e2} is flat 
with respect to this connection $\widetilde{\nabla}^{\rm ad}$ on $\text{ad}(E_G)\otimes\Omega^1_X$. 

To prove that $\theta$ is flat, first note that $\widetilde{\nabla}^{\rm ad}$ is the canonical flat 
connection on $\text{ad}(E_G)\otimes\Omega^1_X$, because
${\nabla}^{\rm ad}$ is the canonical flat connection on $\text{ad}(E_G)$ and the trivial 
connection on $\Omega^1_X$ is the canonical flat connection on $\Omega^1_X$. Therefore, any holomorphic 
section of $\text{ad}(E_G)\otimes\Omega^1_X$ is flat with respect to the connection 
$\widetilde{\nabla}^{\rm ad}$. In particular, the section $\theta$ in \eqref{e2} is flat with
respect to $\widetilde{\nabla}^{\rm ad}$.
\end{proof}

\begin{lemma}\label{lem2}
The translation invariance structure on $E_G$ given by the flat connection $\nabla^{G,0}$
preserves the reduction $E_H\, \subset\, E_G$.
\end{lemma}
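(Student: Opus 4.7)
The strategy is to show, first, that the translation action preserves the Lie-algebra subbundle $\text{ad}(E_H)\subset\text{ad}(E_G)$, and then to promote this to preservation of the reduction $E_H$ itself. The first step relies on the principle from \cite{BG,Si} already invoked in the proof of Lemma~\ref{lem1}: any holomorphic homomorphism between pseudostable vector bundles on $X$ with vanishing Chern classes is automatically flat with respect to the canonical flat connections.

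For the first step, the Cartan diagram \eqref{e1} together with the isomorphism $\beta$ identifies $\text{ad}(E_G)/\text{ad}(E_H)$ with $TX$, which is trivial on the torus $X$. Since $\text{ad}(E_G)$ is pseudostable with vanishing $c_1$ and $c_2$ (because $E_G$ admits a holomorphic connection, cf.\ \cite{BG}), the short exact sequence $0\to\text{ad}(E_H)\to\text{ad}(E_G)\to TX\to 0$ forces $\text{ad}(E_H)$ to be semistable of slope zero with $c_1=c_2=0$, hence pseudostable with vanishing Chern classes. Applying the cited principle to the inclusion $\text{ad}(E_H)\hookrightarrow\text{ad}(E_G)$ makes it flat, so the translation action on $\text{ad}(E_G)$ preserves $\text{ad}(E_H)$.

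For the second step, this implies that for every $y\in\widetilde{X}$, the image $\delta_y(E_H)$ is an $H$-reduction of $E_G$ whose adjoint subbundle coincides with $\text{ad}(E_H)$, so that $\delta_y(E_H)$ and $E_H$ differ only by an $N_G(\mathfrak h)/H$-ambiguity. To kill this ambiguity I would exploit the affineness of $G$: by choosing a faithful finite-dimensional holomorphic representation $G\hookrightarrow\text{GL}(N,\mathbb{C})$ and invoking a Chevalley-type construction, one obtains a finite-dimensional $G$-representation $W$ together with a line $L\subset W$ whose stabilizer in $G$ is exactly $H$. The reduction $E_H$ is then encoded as a sub-line-bundle $L(E_H)\subset W(E_G)$ of the pseudostable bundle $W(E_G)$. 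Since this sub-line-bundle has degree zero (as $W(E_G)$ is semistable of slope zero and the associated quotient is again a pseudostable bundle of slope zero), a second application of the Biswas--G\'omez/Simpson principle makes $L(E_H)\hookrightarrow W(E_G)$ flat, forcing $\delta_y(E_H)=E_H$.

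The main obstacle is producing the representation $W$ with $H$ as the stabilizer of a line: in the complex affine \emph{algebraic} setting this is Chevalley's theorem, but in the complex affine Lie group setting some additional care is required. Once such a $W$ is in hand, the flatness machinery of \cite{BG,Si} applies exactly as in Lemma~\ref{lem1} to close the argument.
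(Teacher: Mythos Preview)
Your first step---showing that $\text{ad}(E_H)$ is pseudostable with vanishing Chern classes via the exact sequence with trivial quotient $TX$---is exactly what the paper does. The divergence is in the second step. The paper does not invoke a Chevalley line at all. Instead, from the pseudostability of $\text{ad}(E_H)$ it concludes (via the criterion in \cite{BG}) that the principal $H$--bundle $E_H$ is itself pseudostable with vanishing characteristic classes; since $H$ inherits affineness from $G$, the theory of \cite{BG} then supplies a canonical flat connection $\nabla^{H,0}$ on $E_H$. Functoriality of the canonical connection under extension of structure group forces $\nabla^{G,0}$ to be the connection on $E_G$ induced by $\nabla^{H,0}$, and a connection induced from a reduction automatically preserves that reduction. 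This sidesteps the $N_G(\mathfrak h)/H$ ambiguity altogether.

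The obstacle you flag is genuine: Chevalley's theorem requires $G$ and $H$ to be algebraic, whereas the paper's notion of ``affine'' only demands an injective Lie-algebra map into $\text{M}(N,\mathbb C)$. There is also an unflagged gap in your degree computation: semistability of $W(E_G)$ gives only $\deg L(E_H)\le 0$, and your claim that the quotient $W(E_G)/L(E_H)$ is pseudostable of slope zero is not justified---that quotient is associated to $E_H$ (through the $H$--module $W/L$), not to $E_G$, so its pseudostability is precisely what is not yet known at this stage of the argument. The paper's route through the canonical connection on $E_H$ avoids both difficulties.
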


\begin{proof}
We know that $E_G$ is pseudostable and its characteristic classes vanish \cite{BG}. Let 
$$\text{ad}(E_H)\, \subset\, \text{ad}(E_G)$$ be the adjoint bundle for $E_H$. From \eqref{e1} 
we know that the quotient bundle $\text{ad}(E_G)/\text{ad}(E_H)$ is isomorphic to the 
holomorphic tangent bundle $TX$ of the torus and, consequently, it is trivial. In particular, 
$\text{ad}(E_G)/\text{ad}(E_H)$ is pseudostable and its Chern classes vanish. Therefore, the 
sub-vector bundle $\text{ad}(E_H)$ is also pseudostable and its Chern classes vanish. From these 
it follows that $E_H$ is pseudostable and its characteristic classes vanish.

Let $\nabla^{H,0}$ be the canonical flat connection on $E_H$. The canonical connection
$\nabla^{G,0}$ on $E_G$ is induced by $\nabla^{H,0}$. This implies that the reduction $E_H\, \subset\,
E_G$ is preserved by $\nabla^{G,0}$.
\end{proof}


\end{document}